\documentclass{elsarticle}

\hoffset=-3cm \voffset=-2.5cm
\textwidth=18cm \textheight=23cm
\makeatletter
\def\ps@pprintTitle{%
 \let\@oddhead\@empty
 \let\@evenhead\@empty
 \def\@oddfoot{}%
 \let\@evenfoot\@oddfoot}
\makeatother
\linespread{1.25}
\usepackage{amsthm}
\usepackage{graphicx}
\usepackage{hyperref}
\usepackage{amsfonts}
\usepackage{epstopdf}
\usepackage{algorithmic}
\usepackage{amssymb,amsmath,epsfig,multirow}
\usepackage{mathtools} 
\usepackage{color}
\usepackage{lineno,hyperref}
\usepackage{rotating}
\usepackage{makeidx}
\usepackage{lscape}
\usepackage{float}
\usepackage{epsfig}
\usepackage{amsmath,amssymb}
\usepackage{enumitem}
\usepackage{epic,eepic}
\usepackage{overpic}
\usepackage{color}
\usepackage{amsfonts}
\usepackage{graphicx}
\usepackage{enumitem}
\usepackage{pgf,tikz,pgfplots}
\pgfplotsset{compat=1.15}
\usepackage{mathrsfs}

\usetikzlibrary{arrows}

\newtheorem{thm}{Theorem}
\numberwithin{thm}{section}


\newtheorem{prop}[thm]{Proposition}

\newcommand{\ZZ}{\mathbb{Z}}
\newcommand{\RR}{\mathbb{R}}

\begin{document}
\title{Behaviour Preserving Extensions of Univariate and Bivariate Functions}
\author{David Levin}
\ead{levindd@gmail.com}
\date{Received: date / Accepted: date}
\address{School of Mathematical Sciences. Tel-Aviv University. Tel-Aviv, Israel}

\begin{abstract}
Given function values on a domain $D_0$, possibly with noise, we examine the
possibility of extending the function to a larger domain $D$, $D_0\subset D$.
In addition to smoothness at the boundary of $D_0$, the extension on
$D\setminus D_0$ should also inherit behavioral trends of the function on
$D_0$, such as growth and decay or even oscillations. The approach chosen here
is based upon the framework of linear models, univariate or bivariate, with
constant or varying coefficients.

\end{abstract}

\maketitle

\section{Introduction}
R. Prony \cite{Prony} has suggested in 1795 to model a data sequence $\zeta_n$
by a of complex exponentials, and he has already shown that such a model is
equivalent to finding a linear model with constant coefficients for the sequence's evolution. D. Shanks \cite{Shanks} has revived this approach,
used it for convergence acceleration, and has shown its relation to Pad\'e
approximation. Prony's method for the decomposition of a signal with a finite
number of complex exponentials is a powerful tool for signal analysis. It has
been studied by many authors, who made valuable contributions to the practical
application of the method, e.g., by Osborne and Smyth \cite{Osborne}.

Two-dimensional (2-D) linear prediction models with constant coefficients were utilized in \cite{Levin1980} within the framework of double series and bivariate Padé approximation. In the 2-D case, the equivalence between linear prediction and exponential fitting no longer holds. Consequently, the class of double sequences that satisfy 2-D linear models with constant coefficients is generally broader than simple sums of exponentials.

1-D linear models with varying coefficients have been studied in
\cite{Levin1973} and in \cite{LevinSidi} who introduced new sequence
transformations that are efficient for a wide class of sequences, wider than
those that are defined by sums of exponentials. For a complete overview see
\cite{SidiBook}.

Prony's method is closely related to least squares linear prediction algorithms and is highly effective for sequence extension or extrapolation. Extending a function presents a similar challenge, with the added constraint that the extension must also maintain smoothness.

 In this paper, we first demonstrate how a univariate linear prediction model with constant coefficients can be effectively used for smooth functional extension without the need to solve for the exponents, as required in Prony's method. Moreover, we show that this approach can be extended to higher-dimensional prediction. In Section 5, we further refine the method and propose a spline-based model that significantly reduces computational complexity.

\section{Univariate case - from a linear model to extension}
Given a function on a domain $D_0$, the ideal information for extending the
function into a larger domain would be the differential equation that the
function satisfies on $D_0$. If the differential equation is simple, e.g.,
with constant coefficients, it can be extended beyond $D_0$, and the extension
of the function may be defined by solving the differential equation with
proper initial or boundary conditions. If the function is known only on a
discrete set of points in $D_0$, say on a uniform grid, we may hope to find a
difference equation that the data values satisfy on the grid. Knowing the
difference equation may serve as a starting point for both approximation the
function on $D_0$ and extending it beyond $D_0$, as explained below. It is
important to note that a univariate function is a sum of exponentials if and
only if it satisfies a linear differential equation with constant
coefficients. Also, the values of a univariate function on a regular grid is a
sum of exponentials if and only if they satisfy a linear difference equation
with constant coefficients. This equivalence does not hold in 2-D. The case of
linear differential or difference equations with varying coefficients is
studied in \cite{LevinSidi} for the generation of the $d$ and the $D$
transformations for the efficient extrapolation of infinite series and
integrals. It is shown there that the class of functions satisfying such
equations covers most of the special functions, and their combinations.

\subsection{Extracting linear prediction models}
Let $D_0=[a,b]$ and consider data sets $\{x_i,f_i\}$, where
$\{f_i\}$ are function values (possibly with noise) at equidistant
points $x_i=a+ih, i=0,...,N$, $h=(b-a)/N$. We would like to find a
difference equation, or a linear prediction model, by which we can
extend the function for $x>b$. Recalling our declared goal, we would
like the extension to carry along the characteristic behavior of the
function within the interval $[a,b]$. Since $h$ may be very small,
difference relation on a sequence of data values at distance $h$
cannot catch the global behavior of $f$ on $[a,b]$. Also, in
particular, in the presence of noise, the problem of finding a difference
equation satisfied by the given data may be quite unstable. Let
$d=nh$, we quest for a linear prediction model of order $m$
satisfied by all the data sequences of mesh size $d$,
$\{f_{i+(j-1)n}\}_{j=1}^{[N/n-1]}$, $i=0,...,n$. As we shall see
below, the value $d$ determines, by Nyquist sampling theory, the
frequencies that can be reconstructed by the prediction model. We
consider linear prediction models of the form:
\begin{equation}\label{eq:pred1}
[1+q_{m+1}u(x_i)]f_i=\sum_{k=1}^m [p_k+q_ku(x_i)] f_{i-(m-k+1)n}.
\end{equation}
Typical choices of the function $u$ would be:
\begin{enumerate}
\item $u\equiv 0$ for a model with constant coefficients.
\item $u(x)=x$ for a linearly varying model.
\item $u(x)=\frac{1}{x+\alpha}$ for a model with rational
variation.
\end{enumerate}

Now we may use a standard way of defining an approximate prediction
model by least-squares fit, as follows:

We look for model coefficients $P=\{p_k\}_{k=1}^m$ and
$Q=\{q_k\}_{k=1}^{m+1}$ such that
\begin{equation}\label{eq:ls1}
I_1(P,Q)=\sum_{i=mn}^N[[1+q_{m+1}u(x_i)]f_i-\sum_{k=1}^m
[p_k+q_ku(x_i)] f_{i-(m-k+1)n}]^2\ \to\ min.
\end{equation}

In Section 4 we discuss other options for extracting the model. In
particular, we consider improving the numerical stability by
minimizing
\begin{equation}\label{eq:ls4}
I_1(P,Q)+\nu \sum_{k=1}^m p_k^2+\mu\sum_{k=1}^{m+1} q_k^2.
\end{equation}
In Section 5 we use linear models for the bivariate extension
problem.

\vfill\eject
\section{Approximation and extension algorithms}

\subsection{Univariate models with constant coefficients}\label{sec1}

Let us first discuss a linear model with constant coefficients.
I.e., we would like to approximate the data on $[a,b]$, and extend
it beyond $b$, using the linear model
\begin{equation}\label{eq:pred2}
g_i=\sum_{k=1}^m p_k g_{i-(m-k+1)n}.
\end{equation}
Such a model takes us back to Prony's method, i.e., approximation by
a sum exponentials. Let $\{\lambda_j\}_{j=1}^m$ be the roots of the
polynomial
\begin{equation}\label{eq:char}
p(\lambda)=\sum_{k=1}^m p_k \lambda^{k-1} -\lambda^m.
\end{equation}
We also assume, for simplicity, that all the roots of $p$ are
simple. Then, all sequences satisfying (\ref{eq:pred2}) are of the
form
\begin{equation}\label{eq:expsum1}
g_{rn+i}=\sum_{j=1}^m c^{(i)}_j \lambda_j^r,\ r\in \ZZ.
\end{equation}
with coefficients $\{c^{(i)}_j\}$ depending on $i$. We recall that
the value $g_{rn+i}$ is attached to the point $x=a+(rn+i)h=a+rd+ih$,
and we would like to define a smooth function $g$ such that
$g(a+rd+ih)=g_{rn+i}$. W.l.o.g., we may assume that $d=1$, and
obtain the relation
\begin{equation}\label{eq:expsum2}
g(a+r+ih)=\sum_{j=1}^m c^{(i)}_j \lambda_j^r=\sum_{j=1}^m {\tilde
c}^{(i)}_j \lambda_j^{a+r+ih},\ r\in \ZZ.
\end{equation}
The evident way of defining a smooth $g$ on $\RR$ is to make the
coefficients independent on $i$, i.e., ${\tilde c}^{(i)}_j={\tilde
c}_j$ for any $i$. Then, $g$ is simply
\begin{equation}\label{eq:expsum3}
g(x)=\sum_{j=1}^m {\tilde c}_j\lambda_j^x,\ \ \ x\in \RR.
\end{equation}
Other ways for constructing a smooth $g$ satisfying the model will
be presented in the following sections.

\bigskip
{\bf The approximation extension algorithm for linear model with
constant coefficients:}

\begin{enumerate}
\item Find the model coefficients by (\ref{eq:pred1}) with $u=0$.
\item Find the exponents $\{\lambda_j\}_{j=1}^m$ as the roots of the
characteristic polynomial (\ref{eq:char}).
\item Define the approximation on $[a,b]$ and the extension by
(\ref{eq:expsum3}) where the coefficients $\{{\tilde c}_j\}$ are
obtained by least-squares approximation to the given data on
$[a,b]$.
\end{enumerate}

There are some technical issues to deal with in the above algorithm. One is
the case of multiple roots in step 2, and another is the problem of complex
approximation to a real function in step 3. The last issue can be resolved by
replacing the basis functions in step 3 above by an independent subset of
$\{Re(\lambda_j^x),Im(\lambda_j^x)\}_{j=1}^m$.  More problematic is the issue
of choosing the right order $m$ for the model. If $m$ is too small the model
cannot approximate the data, and if $m$ is too large some of the extra
resulting exponents may introduce highly oscillatory behavior or another type
of instability. Altogether, as shown in examples 1-2 below, the above
algorithm works quite nicely for noisy data of functions for which can be well
approximated by sums of exponentials. However, the above approach cannot be
applied to models with varying coefficients, and in the bivariate case it is
not suitable even for models with constant coefficients. Therefore, the main
purpose of this paper would be to suggest more general approximation-extension
algorithms that are applicable to those cases as well.

\bigskip
{\bf Examples of approximation extension using exponential's
fitting:}

As an example we considered extension of the function:
\begin{equation}\label{eq:f1}
f_1(x)=.8^x-cos(x)+2sin(2x)+\frac{1}{x+1}\ \ \ x\in [0,7],
\end{equation}
to $[0,14]$. We first demonstrate the results for non-noisy data. The
parameters used are $n=50$, $h=0.02$, and a model of the form (\ref{eq:pred2})
with $m=6$. The resulting exponents are:
$$\{\lambda_j\}=\{0.061818, 0.772124,-0.416977\pm 0.908787i,0.520298\pm 0.852041i\}.$$
Note that $0.8$, $e^i=0.540302\pm 0.841471i$ and $e^{2i}=-0.416149\pm
0.909297i$ are the exact exponentials constituting $f_1$. However, since $f_1$
is not a 'pure' sum of exponentials, we do not get them exactly. In Figure 1
we plot the function $f_1$ together with the reconstructed
approximation-extension $g$ on $[0,14]$ (defined by (\ref{eq:expsum3})), using
function values only on $[0,7]$.

Next we consider the same test function $f_1$, measured at the same points in
$[0,7]$, but with an added noise, randomly distributed in $[-0.2,0.2]$. Here
the resulting exponents of a model of the same size, $m=6$, turn to be
$$\{\lambda_j\}=\{0.273076,-0.864675,-0.414134\pm 0.908059i,0.542686\pm 0.562524i\}$$
In Figure 2 we plot the reconstructed approximation-extension $g$ on $[0,14]$
together with the data on $[0,7]$ used in the algorithm, and the exact
function $f_1$ on $(7,14]$. We note that the complex exponents are not so
sensitive to the noise, but the real frequencies are quite different. Yet, the
approximation in $[0,7]$ is good, and the extension is also reasonable.
\vfill\eject

\begin{figure}[hb]
  \centering
\includegraphics[width=3in]{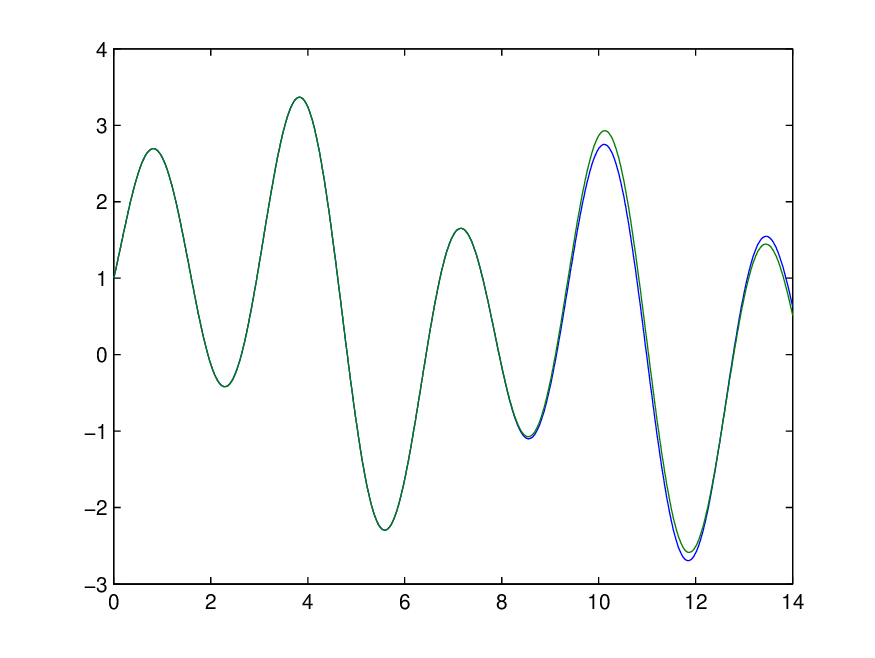}
  \caption[example1]{Approximation to non-noisy data on $[0,7]$ and extension on $[0,14]$}
\end{figure}
\begin{figure}[hb]
  \centering
  \includegraphics[width=3in]{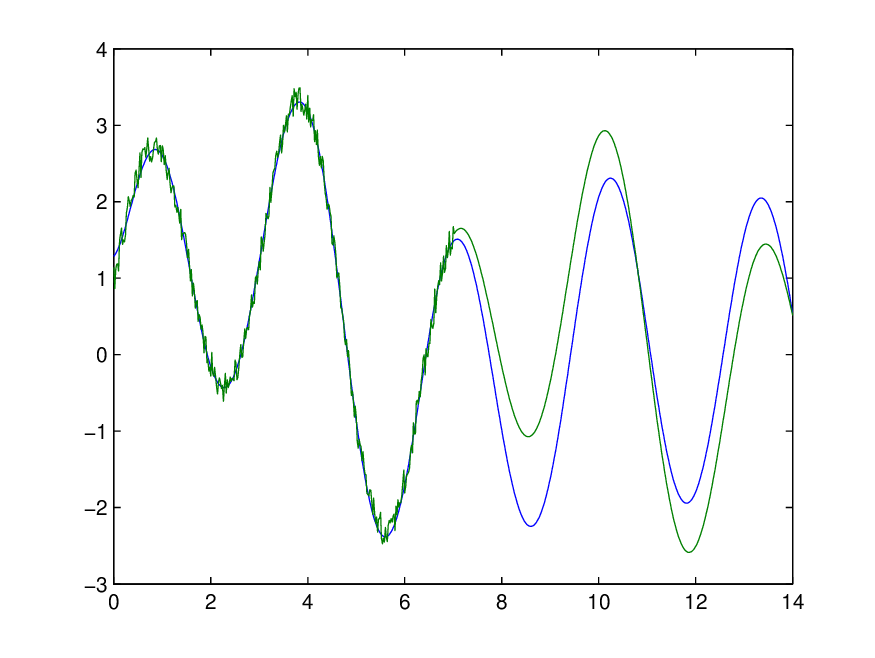}
  \caption[example2]{Approximation to noisy data on $[0,7]$ and extension on $[0,14]$}
\end{figure}

\subsection{Univariate models with varying coefficients}
Unlike the above Prony's type method, the algorithm presented below
does not rely on finding the general solution of sequences
satisfying the model. Hence, in principle, it is applicable even for
non linear models with varying coefficients. The method is based
upon joining together all the required elements into one objective
functional, and minimizing this functional within the space of all
sequences satisfying the model. We demonstrate the approach via
linear models with constant or varying coefficients.

Let us denote by $g\in M$ a sequence $g=\{g_i\}_{n_0\le i \le n_1}$,
$n_0\le 0$ and $n_1\ge N$, satisfying a model $M$. We would like to
find a sequence $g\in M$ such that:

 {\bf a.} $\{g_i\}_{i=0}^N$ approximates the given data
sequence $\{f_i\}_{i=0}^N$.

 {\bf b.} $g$ is smooth.

Requirement {\bf a.} is reflected in the functional
\begin{equation}\label{eq:lsg}
E(g)=\sum_{i=0}^N[f_i-g_i]^2.
\end{equation}
while the smoothness requirement {\bf b.} is represented by
\begin{equation}\label{eq:spg}
S_p(g)=\sum_{n_0\le i \le n_1-p}[\Delta^pg_i]^2,
\end{equation}
where $\Delta$ is the ordinary difference operator.

\bigskip
{\bf The approximation extension algorithm for general models:}

\begin{enumerate}
\item Find an appropriate model $M$ for the data $\{f_i\}_{i=0}^N$.
\item Define the approximation on $\{x_i\}_{i=0}^N$ and its extension as the sequence
$g=\{g_i\}_{n_0\le i \le n_1}$ minimizing the functional
\begin{equation}\label{eq:F_p}
F_p(g)=S_p(g)+\mu E(g),\ \  g\in M.
\end{equation}
\end{enumerate}

{\bf Discussion:} The new element here is the inclusion of the
smoothing functional $S_p$. Such a functional is a very common tool
in Computer Aided Geometric Design, where it is used for smooth
filling of holes in a surface. In approximation theory $S_p$ is
viewed as a regularization functional. Let us explain its special
role in our context; In the case of a model with varying
coefficients, if the coefficients vary smoothly, each sequence
obeying the model may be smooth, in some sense. However, let us
recall that the model connects points that are $d$ distant apart,
or, equivalently, $n$ indices apart, from each other (see
(\ref{eq:pred2})). Therefore, within $g\in M$ there may be $n$
smooth independent subsequences satisfying the model. The first role
of the functional $S_p$ is to force those $n$ subsequences of $g$ to
unite into one smooth sequence. Furthermore, even in the case of
models with constant coefficients, the space of sequences satisfying
the model may include parasitic highly oscillatory sequences. The
second role of smoothing functional $S_p$ is to invalidate these
parasite components in $M$. The parameter $\mu$ determines a balance
between the functionals $S_p$ and $E$. In the examples below we
discuss the effects of the parameter $\mu$ and the order $p$ of the
difference operator in $S_p$.

One may also argue that it is enough to take care of the smoothness
of $\{g_i\}_{i=0}^N$, and to continue the sequence by the model.
However, this approach has been found to be unstable, and moreover,
as explained in Section 5, this approach cannot work in 2-D.

\bigskip
{\bf Examples of approximation-extension using a model
and a smoothing functional:}
\bigskip
Here we repeat the example with $f_1$ defined in (\ref{eq:f1}), measured at the
same points in $[0,7]$, with an added noise randomly distributed in
$[-0.2,0.2]$. The model is also of the same size, $m=6$, with constant
coefficients, but the reconstruction is computed by minimization of $F_2$
defined in (\ref{eq:F_p}) with $\mu=0.001$.

\begin{figure}[hb]
  \centering
  \includegraphics[width=3in]{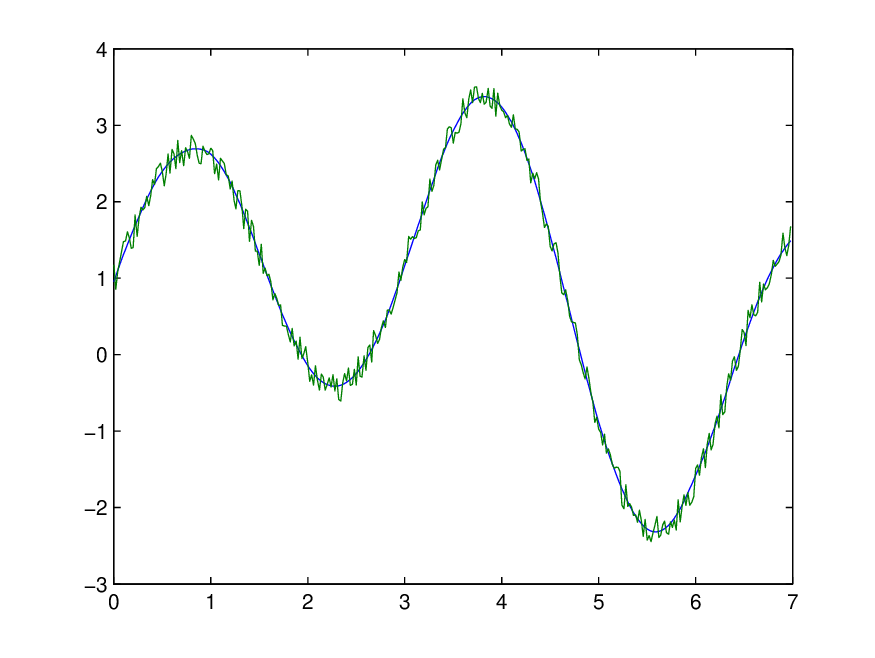}
  \caption[example1]{Approximation to the non-noisy data on $[0,7]$.}
\end{figure}

\begin{figure}[hb]
  \centering
  \includegraphics[width=3in]{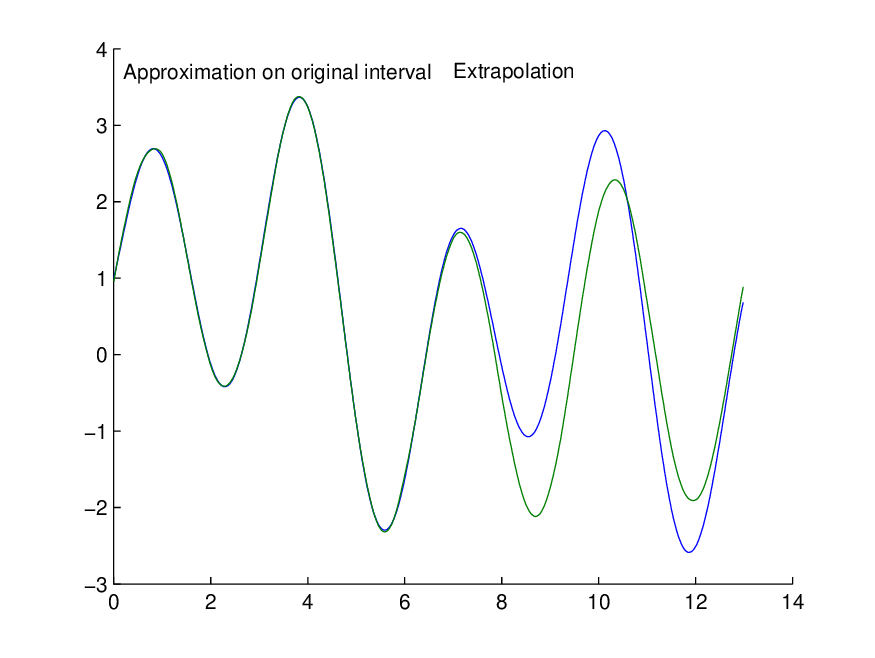}
  \caption[example2]{Approximation-extension on $[0,14]$}
\end{figure}

\subsection{The scope of linear models with varying coefficients}\label{sec2}

D. Shanks \cite{Shanks} investigated the use of exponentials fitting to a
sequence as a tool for convergence acceleration. It is also shown there that
fitting a linear model with constant coefficients to the terms of a power
series leads to Pad\'e approximations. The use of linear models with varying
coefficients has been studied in \cite{Levin1973} and in \cite{LevinSidi} who
introduced new sequence transformations that are efficient for a wide class
of sequences, wider than those that are defined by sums of exponentials. In
\cite{LevinSidi} we can find examples of classes of sequences $\{a_k\}$ that
satisfy a linear model with coefficients that has asymptotic expansion in
inverse powers of $k$, as $n\to\infty$. We refer to models of general order
$m$ of the form:
\begin{equation}\label{eq:modela}
a_{k+m+1}=\sum_{i=1}^m p_i(k) a_{k+i},
\end{equation}
where
\begin{equation}\label{eq:pi}
p_i(k)\sim k^{r_i}\sum_{j=0}^\infty p_{i,j}k^{-j},\ \ \ as\ \
k\to\infty,\ \ r_i\in \ZZ.
\end{equation}
As a simple example consider the sequence $a_k=k^{\alpha}e^{ck}$.
Obviously,
\begin{equation}\label{eq:b1}
a_{k+1}=e^c(1-\frac{1}{k+1})^\alpha a_k,
\end{equation}
that is a linear model, with coefficients that vary with $k$. For any $c$ and
$\alpha$, the coefficients has asymptotic expansion in inverse powers of $k$,
as $k\to\infty$. Following \cite{LevinSidi} we denote by $B^{(m)}$ the class
of sequences satisfying a model of order $m$ of the form (\ref{eq:modela})
with coefficients that has asymptotic expansions of the form (\ref{eq:pi}).

Similar to the case of exponentials, the following algebraic rules hold
(\cite{LevinSidi}):

Let $\{a_k\}\in B^{(m_1)}$ and $\{b_k\}\in B^{(m_2)}$ then
\begin{equation}\label{eq:ls80}
\{a_k+b_k\}\in B^{(m_1+m_2)},\ \ \{a_kb_k\}\in B^{(m_1m_2)}.
\end{equation}

For example, by the above properties we can analyze sequences of
equidistant evaluations of Bessel functions, concluding that
$\{a_k\}=\{J_\nu(kd)\}\in B^{(2)}$ for any order $\nu$ and for any
$d$. Altogether, we recall here the rich family of sequences
satisfying linear models of type $B^{(m)}$. In this paper we report
the use of a restricted sub-class of $B^{(m)}$, namely models of the
form (\ref{eq:pred1}) with $u(x)=x$ or $u(x)=\frac{1}{x+\alpha}$.

\bigskip
{\bf Examples of approximation-extension using a model with rational
coefficients:}
\bigskip
As an example we considered extension of the function:
\begin{equation}\label{eq:f2}
f_2(x)=\frac{5cos(2x)}{x^2+1}+x^{1.5}sin(x),\ \ \ x\in [0,7],
\end{equation}
to $[0,14]$. Here again $f_2$ is measured with a noise, randomly distributed
in $[-0.2,0.2]$. The parameters used are $n=100$, $h=0.01$, and a model of the
form (\ref{eq:pred1}) with $u(x)=\frac{1}{x+\alpha}$ and $m=6$. The
reconstruction is computed by minimization of $F_2$ defined in (\ref{eq:F_p})
with $\mu=0.0001$ ($\mu\sim h^2$).

\vfill\eject
\begin{figure}[hb]
  \centering
  \includegraphics[width=3in]{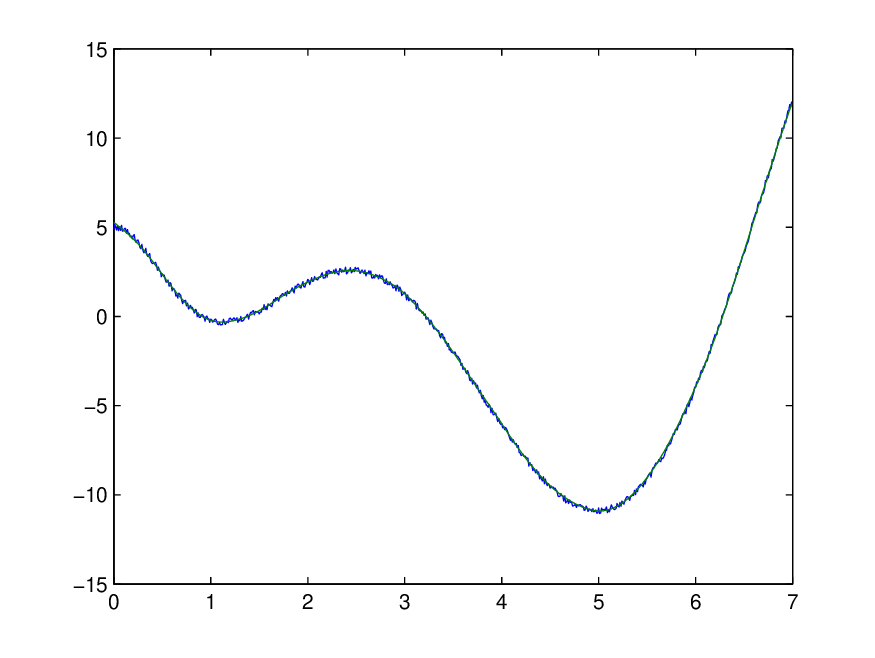}
  \caption[example1]{Rational coefficients model: Approximation to the non-noisy data on $[0,7]$.}
\end{figure}

\begin{figure}[hb]
  \centering
  \includegraphics[width=3in]{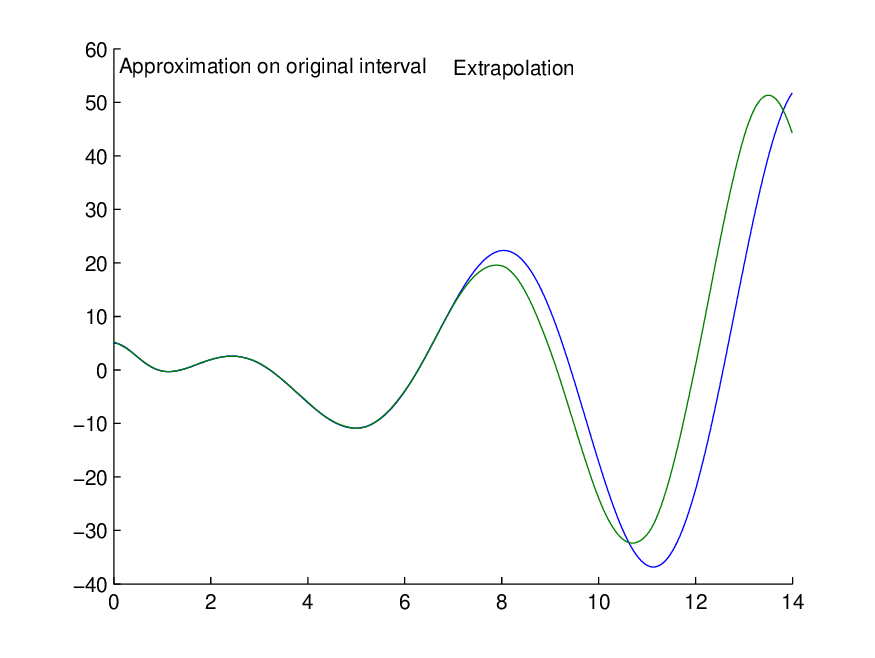}
  \caption[example2]{Rational coefficients model: Approximation-extension on $[0,14]$}
\end{figure}

\bigskip
{\bf Remarks:}
\begin{enumerate}
\item Using the above characterization rules, the function (\ref{eq:f2}) is in $B^{(4)}$, and it cannot
satisfy a linear model with constant coefficients. Our model, with
rational coefficients of low degree, is also not ideal here, but it
performs a little better that a model with constant coefficients.
\item The right choice of the parameter $\mu$ is important. Choosing
$\mu$ too big results in a bad approximation to the data, while a
too small $\mu$ yields non-smooth approximation. The rule
$\mu=0.0001$ $\mu\sim h^2$ works well.
\end{enumerate}

\vfill\eject
\section{The bivariate case - from a linear model to smooth extension}

W.L.O.G, let $D_0=[a,b]\times [a,b]$ and consider data sets
$\{(x_i,y_j),f_{i,j}\}$, where $\{f_{i,j}\}$ are function values
(possibly with noise) on a square mesh $(x_i,y_j)=(a+ih,a+jh),
i,j=0,...,N$, $h=(b-a)/N$. We would like to find a difference
equation, or a linear prediction model, by which we can extend the
function into $D=[c,d]\times [c,d]$, $c<a$, $d>b$. As in the
univariate case, we start by finding a linear model of size $m\times
m$ satisfied by all the data sequences of mesh size $d=nh$,
$\{f_{i+(k-1)n,j+(\ell-1)n}\}_{k,\ell=1}^{[N/n-1]}$, $i,j=0,...,n$.
Denoting $K=\{(k,\ell): k,\ell=1,...,m\}$, we consider linear models
of the form:
\begin{equation}\label{eq:model2D}
\sum_{(k,\ell)\in K} p_{k,\ell} f_{i+(k-1)n,j+(\ell-1)n}=0,
\end{equation}
with the normalization $p_{m,m}=1$.

The first step in the extension algorithm is finding an approximate
model $M$ for the given data. As in the 1-D case we do it by
least-squares minimization. Define $K^-=K\setminus \{(m,m)\}$We look
for model coefficients $P=\{p_{(k,\ell)}\}_{(k,\ell)\in K^-}$ and
such that
\begin{equation}\label{eq:ls2}
I_2(P)=\sum_{(i,j)}\big[[\sum_{(k,\ell)\in K^-} p_{k,\ell}
f_{i+(k-1)n,j+(\ell-1)n}]+f_{i+(m-1)n,j+(m-1)n}\big]^2 \to\ min.
\end{equation}

As remarked in the introduction, 2-D Linear prediction models with
constant coefficients describe function spaces that are much richer
than sums of exponentials. In fact the space of functions satisfying
a given 2-D model is usually of infinite dimension. Hence, the
method of fitting exponentials cannot work here, and we use the
above approximation+smoothing algorithm, that does not rely on
finding the general solution of sequences satisfying the model.

Let us denote by $g\in M$ a sequence $g=\{g_{i,j}\}_{n_0\le i,j \le
n_1}$, $n_0\le 0$ and $n_1\ge N$, satisfying a model $M$. We would
like to find a sequence $g\in M$ such that:

 {\bf a.} $\{g_{i,j}\}_{i,j=0}^N$ approximates the given data
sequence $\{f_{i,j}\}_{i,j=0}^N$.

 {\bf b.} $g$ is smooth.

Requirement {\bf a.} is reflected in the functional
\begin{equation}\label{eq:lsg2}
E(g)=\sum_{i,j=0}^N[f_{i,j}-g_{i,j}]^2.
\end{equation}
while the smoothness requirement {\bf b.} is represented by
\begin{equation}\label{eq:spg2}
S(g)=\sum_{n_0+1\le i,j \le n_1-1}Qg_{i,j},
\end{equation}
where here $Q$ is the following quadratic operator:
\begin{equation}\label{eq:Q}
\begin{aligned}
Qg_{i,j}
&=[\Delta_{xx}g_{i,j}]^2+[\Delta_{yy}g_{i,j}]^2\\
&+\frac{1}{4}\{[\Delta_{xy}g_{i,j}]^2+
[\Delta_{xy}g_{i+1,j}]^2+[\Delta_{xy}g_{i,j+1}]^2+[\Delta_{xy}g_{i+1,j+1}]^2\}
\end{aligned}
\end{equation}
with $\Delta_{xx}g_{i,j}=g_{i-1,j}-2g_{i,j}+g_{i+1,j}$,
$\Delta_{yy}g_{i,j}=g_{i,j-1}-2g_{i,j}+g_{i,j+1}$ and
$\Delta_{xy}g_{i,j}=g_{i,j}-g_{i-1,j}-g_{i,j-1}+g_{i-1,j-1}$. We
note that the the functional $S(g)$ is related to the bi-harmonic
operator.

\bigskip
{\bf The bivariate approximation-extension algorithm:}

\begin{enumerate}
\item Find the model parameter $P$ for the data by minimizing $I_2$ .
\item Define the approximation on $D_0$, and its extension into $D$, as the sequence
$g=\{g_{i,j}\}_{n_0\le i,j \le n_1}$ minimizing the smooth
approximation functional
\begin{equation}\label{eq:F}
F(g)=S(g)+\mu E(g),\ \  g\in M.
\end{equation}
\end{enumerate}

\bigskip
{\bf Examples of bivariate approximation-extension using a model and
a smoothing functional:}

\medskip
Unlike the method of fitting exponentials, the algorithm based upon the smooth
approximation functional is completely linear. It is important to note that in
the bivariate case, even if we knew $g$ on $D_0$, a direct extension of $g$
into the larger domain $D$ by the model would be impossible. Hence, we must
solve here for $g$ on the entire mesh in the larger domain $D$. Of course, the
size of the linear system gets larger with the size $m$ of the model and the
size of the domain $D$.

In the following we demonstrate the performance of the proposed algorithm for
two test functions:
\begin{equation}\label{eq:f3}
f_3(x,y)=0.4cos(4(x+y))+0.6ysin(3(x-y))-(x-2)^2,
\end{equation}
\begin{equation}\label{eq:f4}
f_4(x,y)=x^2-y^3+2+x-y+20*exp(-(x-2)^2),
\end{equation}
In both cases the data is given on a square mesh, of mesh size $h=0.1$, in
$D_0=[0,4]^2$, with an added random noise in $[-0.2,0.2]$. The extension is
into a square mesh, of the same mesh size, in $D=[-2,6]^2$. A $4\times 4$
linear model is computed by (\ref{eq:ls2}), and the approximation-extension
$g$ is computed by minimizing the smooth approximation functional (\ref{eq:F})
with $\mu=100$. The linear system involves a sparse matrix of size $8900\times
8900$, and the solution is done by MINRES iterations.

For the function $f_3$ the resulting model coefficients are:
$$P=\begin{pmatrix}-0.0539&0.1662&-0.0964&-0.7606 \\
    0.3700&-0.0796&-0.3966&-0.4032\\
   -0.2253 &   0.0797  & -0.4422 &   0.2967\\
   -0.1989 &   0.1743  &  0.5024  &  1.0000
\end{pmatrix}$$
The noisy data of $f_3$ is shown in Figure \ref{fig:5_1} and the resulting
extension is shown in Figure \ref{fig:5_2}.

\vfill\eject
\begin{figure}[hb]
  \centering
  \includegraphics[width=3in]{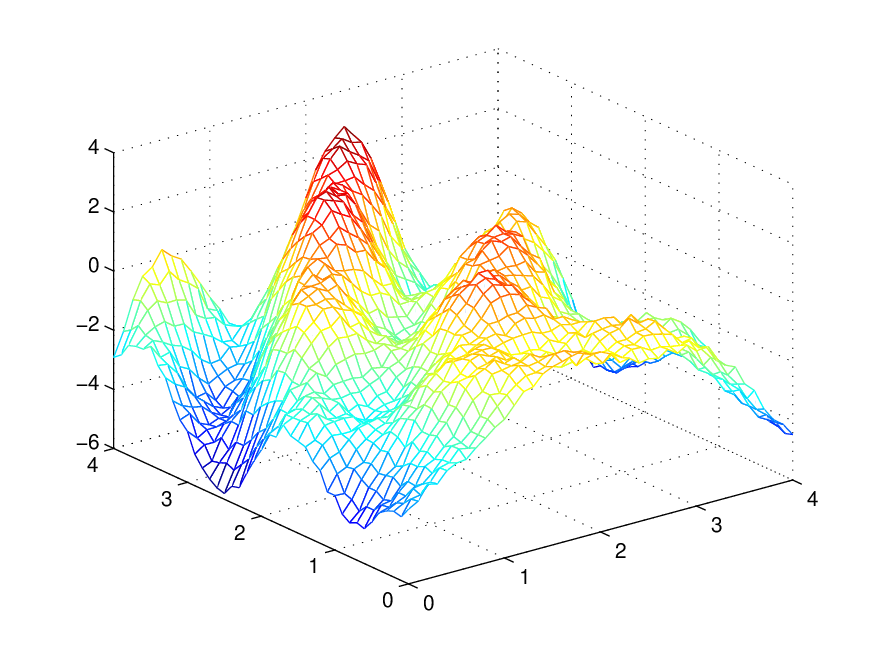}
  \caption[example1]{The non-noisy data of $f_3$ on $D_0=[0,4]^2$.}
  \label{fig:5_1}
\end{figure}

\begin{figure}[hb]
  \centering
  \includegraphics[width=3in]{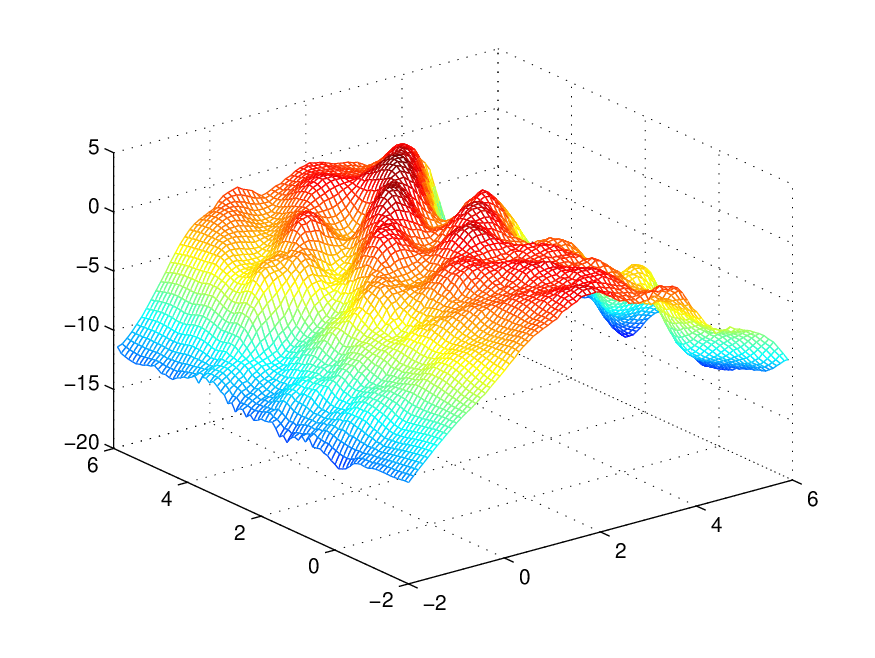}
  \caption[example2]{Approximation-extension of $f_3$ on $D=[-2,6]^2$.}
\label{fig:5_2}
\end{figure}

For the function $f_4$ the resulting model coefficients are:
$$P=\begin{pmatrix}  0.1505  &  0.0772 &   0.1560 &  -0.3483\\
    0.0480 &   0.2556 &   0.1838 &  -0.0669\\
   -0.1204 &  -0.0840 &  -0.0200 &  -0.4786\\
   -0.1863 &  -0.2081 &  -0.3254 &   1.0000
\end{pmatrix}$$

The noisy data of $f_4$ is shown in Figure \ref{fig:5_3} and the resulting
extension is shown in Figure \ref{fig:5_4}.

\vfill\eject
\begin{figure}[hb]
  \centering
  \includegraphics[width=3in]{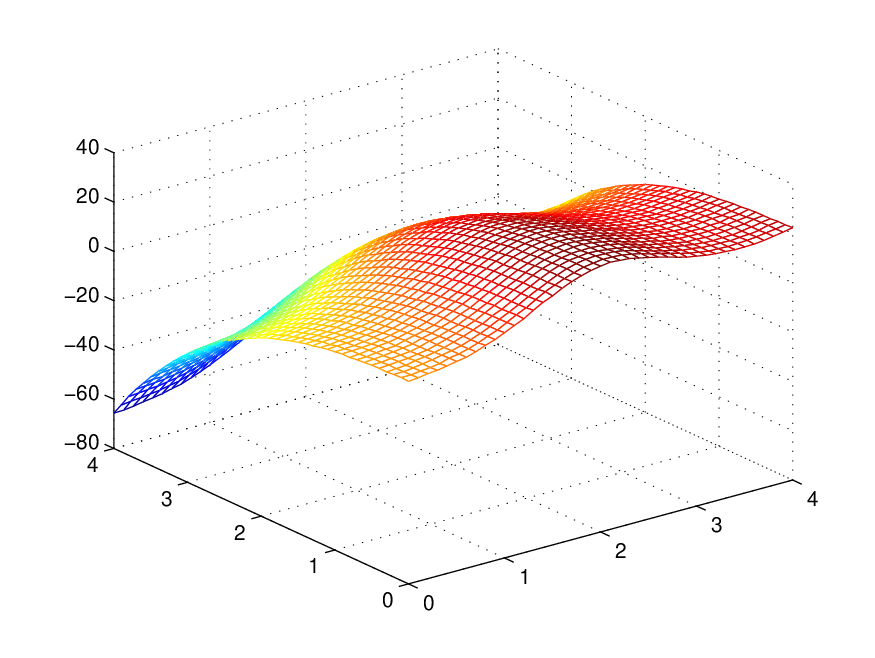}
  \caption[example1]{The non-noisy data of $f_4$ on $D_0=[0,4]^2$.}
  \label{fig:5_3}
\end{figure}

\begin{figure}[hb]
  \centering
  \includegraphics[width=3in]{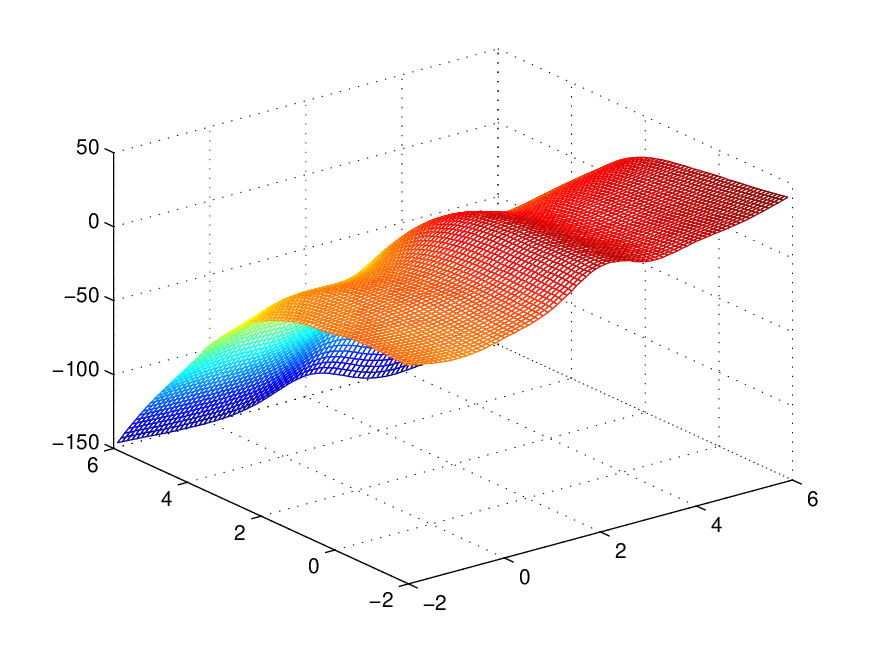}
  \caption[example2]{Approximation-extension of $f_4$ on $D=[-2,6]^2$.}
\label{fig:5_4}
\end{figure}

\vfill\eject
\section{Efficient approximation-extension using spline basis functions model}

The smooth approximation method for approximation-extension discussed above has two major drawbacks: first, the computational complexity increases significantly in the bivariate case; second, selecting an appropriate balancing parameter 
$\mu$ between the approximation and smoothing functionals in equation (\ref{eq:F}) is challenging.

We now introduce a more efficient alternative method that eliminates the need for a balancing parameter. However, this approach is less suited for models with varying coefficients or more complex, general models. The efficiency of this method stems from the following simple observation:

\begin{prop}
\label{p:Prop1} Let $P=\{p_{k,\ell}\}_{(k,\ell)\in K}$ represent a
linear model with constant coefficients, where $K$ is a finite
subset of $\ZZ^2$, and let $\phi$ be a function of compact support
in $\RR^2$. Consider
\begin{equation}\label{eq:gphi}
g(x,y)=\sum_{(i,j)\in \ZZ^2}c_{i,j}\phi(x-i,y-j),\ \ (x,y)\in\RR^2.
\end{equation}
where $\{c_{i,j}\}_{(i,j)\in \ZZ^2}$ is a bi-infinite sequence satisfying the
model, i.e.,
\begin{equation}\label{eq:cinM}
\sum_{(k,\ell)\in K}p_{k,\ell}c_{i+k,j+\ell}=0\ \ \forall (i,j)\in\ZZ^2.
\end{equation}
Then, the function $g$ satisfies the model, i.e.,
\begin{equation}\label{eq:ginM}
\sum_{(k,\ell)\in K}p_{k,\ell}g(x+k,y+\ell)=0\ \ \forall (x,y)\in\RR^2.
\end{equation}
\end{prop}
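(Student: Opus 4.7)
The plan is to prove the proposition by a direct computation: expand the left-hand side of \eqref{eq:ginM} using the definition \eqref{eq:gphi} of $g$, swap the (finite) order of summation, perform a shift of the summation index in $\ZZ^2$, and apply the hypothesis \eqref{eq:cinM} on the coefficient sequence.

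First I would substitute \eqref{eq:gphi} into the model expression applied to $g$, obtaining
\begin{equation*}
\sum_{(k,\ell)\in K}p_{k,\ell}\,g(x+k,y+\ell)
=\sum_{(k,\ell)\in K}p_{k,\ell}\sum_{(i,j)\in\ZZ^2}c_{i,j}\,\phi(x+k-i,y+\ell-j).
\end{equation*}
Next, for each fixed $(k,\ell)\in K$, I would reindex the inner sum by setting $i'=i-k$ and $j'=j-\ell$, which simply shifts a sum over $\ZZ^2$ and hence leaves the summation set unchanged. This turns the $\phi$-argument into $(x-i',y-j')$ at the price of replacing $c_{i,j}$ by $c_{i'+k,j'+\ell}$.

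Then I would exchange the order of summation. This exchange is unconditionally legal here because $K$ is a finite set and $\phi$ has compact support, so for every fixed $(x,y)\in\RR^2$ only finitely many pairs $(i',j')\in\ZZ^2$ contribute a nonzero term; there are no convergence subtleties. After the swap the expression becomes
\begin{equation*}
\sum_{(i',j')\in\ZZ^2}\phi(x-i',y-j')\Bigl[\sum_{(k,\ell)\in K}p_{k,\ell}\,c_{i'+k,j'+\ell}\Bigr].
\end{equation*}
The bracketed factor is exactly the left-hand side of \eqref{eq:cinM} at lattice point $(i',j')$, so by hypothesis it vanishes for every $(i',j')\in\ZZ^2$, and the whole sum is $0$, proving \eqref{eq:ginM}.

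There is really no hard step here: the result is essentially a statement that the model operator $\sum_{(k,\ell)\in K}p_{k,\ell}\,(\cdot)(\cdot+k,\cdot+\ell)$ commutes with translation-invariant convolution-like combinations of a single generator $\phi$. The only point that deserves an explicit sentence is the justification of the reindexing and the interchange of summations, both of which rest on the finiteness of $K$ and the compact support of $\phi$.
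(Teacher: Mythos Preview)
Your argument is correct and is essentially identical to the paper's own proof: both substitute \eqref{eq:gphi} into \eqref{eq:ginM}, shift the lattice index so that $\phi$ appears as $\phi(x-r,y-s)$, interchange the two sums (using finiteness of $K$ and compact support of $\phi$), and invoke \eqref{eq:cinM}. Your explicit justification of the reindexing and the interchange is slightly more detailed than the paper's, but the logic is the same.
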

\begin{proof}
Using the compact support of $K$ and of $\phi$,
\begin{equation}\label{eq:sumsum}
\begin{aligned}
\sum_{(k,\ell)\in K}p_{k,\ell}g(x+k,y+\ell) &=\sum_{(k,\ell)\in
K}p_{k,\ell}\sum_{(i,j)\in \ZZ^2}c_{i,j}\phi(x-i+k,y-j+\ell)\\
&=\sum_{(k,\ell)\in
K}p_{k,\ell}\sum_{(r,s)\in \ZZ^2}c_{r+k,s+\ell}\phi(x-r,y-s)\\
&=\sum_{(r,s)\in \ZZ^2}\phi(x-r,y-s)\sum_{(k,\ell)\in
K}p_{k,\ell}c_{r+k,s+\ell}\\
&=0
\end{aligned}
\end{equation}
\end{proof}
\begin{prop}
\label{p:Prop2} Let $P=\{p_{k,\ell}\}_{(k,\ell)\in K}$ represent a
linear model with constant coefficients, where $K$ is a finite
subset of $\ZZ^2$, and let $\phi$ be a function of compact support
in $\RR^2$ such that its integer shifts are linearly independent.
Consider
\begin{equation}\label{eq:gphi2}
g(x,y)=\sum_{(i,j)\in \ZZ^2}c_{i,j}\phi(x-i,y-j),\ \ (x,y)\in\RR^2.
\end{equation}
If the function $g$ satisfies the model, i.e., (\ref{eq:ginM})
holds, then $\{c_{i,j}\}_{(i,j)\in \ZZ^2}$ satisfies the model,
i.e., (\ref{eq:cinM}) holds.
\end{prop}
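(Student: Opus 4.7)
The plan is to simply run the computation of Proposition \ref{p:Prop1} in reverse, using the hypothesis of linear independence of integer shifts of $\phi$ to draw the desired conclusion from the resulting representation as a vanishing combination of shifts.

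First, I would start from the hypothesis \eqref{eq:ginM} that $g$ satisfies the model, and substitute the representation \eqref{eq:gphi2} for $g$, obtaining
\begin{equation*}
0=\sum_{(k,\ell)\in K}p_{k,\ell}\sum_{(i,j)\in\ZZ^2}c_{i,j}\,\phi(x-i+k,\,y-j+\ell).
\end{equation*}
Next I would perform the index change $r=i-k$, $s=j-\ell$ for each term, and then interchange the order of summation. Since $K$ is finite and, for each fixed $(x,y)$, the compact support of $\phi$ makes the sum over $(r,s)$ effectively finite, the interchange is justified without any analytical subtleties. The outcome is exactly the middle lines of the display \eqref{eq:sumsum} in the proof of Proposition \ref{p:Prop1}, read in reverse:
\begin{equation*}
0=\sum_{(r,s)\in\ZZ^2}d_{r,s}\,\phi(x-r,y-s),\qquad d_{r,s}:=\sum_{(k,\ell)\in K}p_{k,\ell}\,c_{r+k,s+\ell}.
\end{equation*}

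Finally, I would invoke the linear independence hypothesis on the integer shifts of $\phi$. The identity above expresses the zero function as a combination of shifts of $\phi$; at each $(x,y)$ only finitely many summands are nonzero because $\phi$ has compact support, so this is the kind of combination for which the assumed linear independence applies. It follows that $d_{r,s}=0$ for every $(r,s)\in\ZZ^2$, which is precisely condition \eqref{eq:cinM}.

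The only delicate point is the last step, and it is not really an obstacle but a matter of stating the right notion of linear independence. Because $\phi$ is compactly supported, the bi-infinite combination makes pointwise sense and equality to the zero function is a bona fide ``vanishing linear combination of shifts''; the hypothesis that the integer shifts are linearly independent (in the global sense appropriate for compactly supported generators) then directly yields the vanishing of all coefficients $d_{r,s}$. No additional smoothness, decay, or summability assumptions on $\{c_{i,j}\}$ are needed for the argument to go through.
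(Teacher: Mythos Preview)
Your proposal is correct and follows exactly the route the paper takes: the paper's proof consists of the single sentence that the result ``follows directly from \eqref{eq:sumsum} using the linear independence of $\{\phi(x-i,y-j)\}$,'' which is precisely the computation and conclusion you spell out in detail.
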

\begin{proof}
The proof follows directly from (\ref{eq:sumsum}) using the linear
independence of $\{\phi(x-i,y-j)\}$.
\end{proof}
{\bf Remark:} The propositions are formulated for the bivariate
case, but the results hold in any dimension.

\medskip
Given Proposition \ref{p:Prop1} we can easily generate many
functions that satisfy a given model by sums of integer shifts of
any function $\phi$. We want to use smooth basis functions that can form a basis for approximating the data. A suitable choice for this purpose is to define $\phi$ as a B-spline
(e.g., tensor product) with equidistant knots and a mesh size $d$. The mesh
size $d$ should be selected so that $d$-shifts of the B-spline
provide a good approximation space to the function $f$ that we aim to
extend. With this approach, a spline approximation to $f$ will approximately satisfy the same model as $f$
and, due to Proposition
\ref{p:Prop2}, the B-spline coefficients will also approximately
satisfy the same model. The choice of B-splines is natural because of the smoothness functionals used above, that are related to
splines. In our tests, we have used cubic B-splines and their tensor
products. Our next goal is to develop an efficient method for generating the space of splines that satisfy a given model.

\subsection{Model-Spline basis functions for approximation-extension}

Let us start with the univariate case. After selecting the generating function $\phi$ as a cubic B-spline, and the parameter $d$, we would like to
construct a basis for all the splines with equidistant knots, of mesh size
$d$, that satisfy the model. Here again, we assume, w.l.o.g., that $d=1$. We
consider coefficients satisfying a univariate model $M$ of order $m$ of the
form
\begin{equation}\label{eq:cinM1}
c_{i+m+1}=\sum_{k=1}^mp_kc_{i+k},\ \ i\in\ZZ.
\end{equation}
For convenience we denote by $\{c_i\}\in M$ a sequence satisfyng the
model. Viewing (\ref{eq:cinM1}) as a prediction model, any vector of
$m$ initial values $\{c_i\}_{i=r}^{r+m-1}$ generates a sequence
satisfying (\ref{eq:cinM1}) for $i\ge r$. A basis for all the
sequences $\{c_i\}\in M$ for $i\ge r$ may be obtained by using $m$
independent initial vectors, e.g., the standard basis vectors
$e^{(j)}=\{\delta_{i,j}\}_{i=1}^m$, $j=1,...,m$. For an
approximation in $[0,N]$, using the standard cubic B-spline
representation, we need the coefficients $\{c_i\}_{i=-1}^{N+1}$. We
thus generate a basis of $m$ sequences $\{c^{(j)}_i\}\in M$, for
$i\ge -1$, by $m$ independent vectors of initial condition
$\{c^{(j)}_i\}_{i=-1}^{m-2}=e^{(j)}$, $j=1,...,m$. The $m$ spline
functions corresponding to these $m$ sequences are
\begin{equation}\label{eq:S1}
S_j(x)=\sum_{i=-1}^{N+1}c^{(j)}_iB(x-i),\ \ \ j=1,...,m.
\end{equation}
By Proposition \ref{p:Prop1}, any $g\in span\{S_j\}_{j=1}^m$ satisfies the
model $M$. We call such $g$ a model-spline. Thus, the task for the approximation-extension procedure reduces to finding  $g\in span\{S_j\}_{j=1}^m$
that best approximates the given data. To demonstrate the method we go back
to the previous examples, with noisy data for the test functions $f_1$ and
$f_2$, with $m=6$. First, we present plots the basis functions
$\{S_j\}_{j=1}^6$, corresponding to the model found for $f_1$ in Figure
\ref{fig:basisf1}, and for $f_2$ in Figure \ref{fig:basisf2}. One can already
see the behavior of $f_1$ and of $f_2$ living in their corresponding basis
functions. Figure \ref{fig:bsae1} below illustrates the approximation-extension process using these basis functions.

\medskip
{\bf Remarks:}
\begin{enumerate}
\item Any shift of the basis functions $\{S_j\}_{j=1}^m$ also
satisfy the same model. We can use such shift in order to augment our basis
for the approximation-extension algorithm.
\item This approach is both faster in practice and free of adjustable parameters. The smoothness is inherently determined by the choice of the function
$\phi$.
\end{enumerate}

\begin{figure}[hb]
  \centering
  \includegraphics[width=3in]{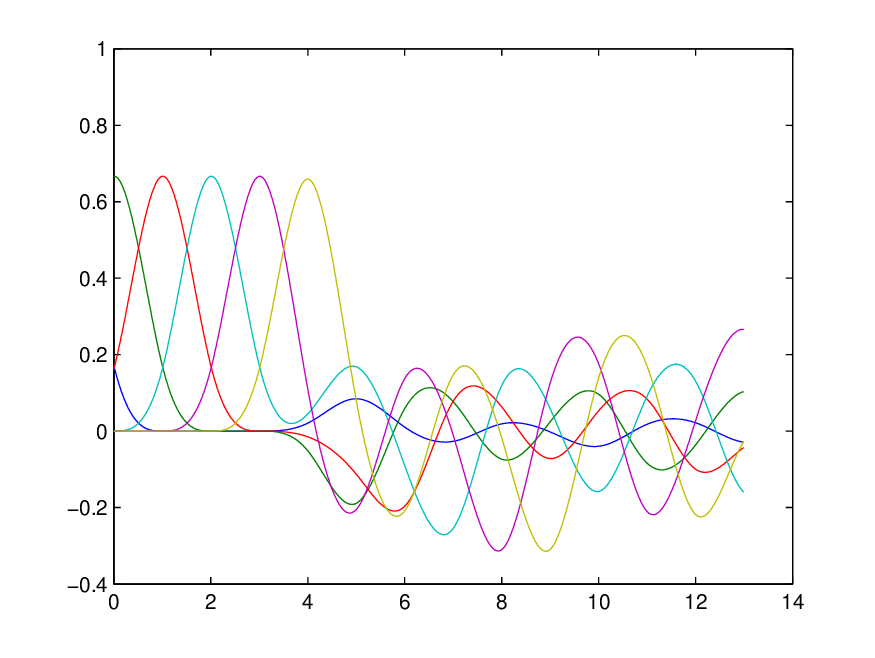}
  \caption[example1]{$\{S_j\}_{j=1}^6$ for the model found for $f_1$.}
  \label{fig:basisf1}
\end{figure}

\begin{figure}[hb]
  \centering
  \includegraphics[width=3in]{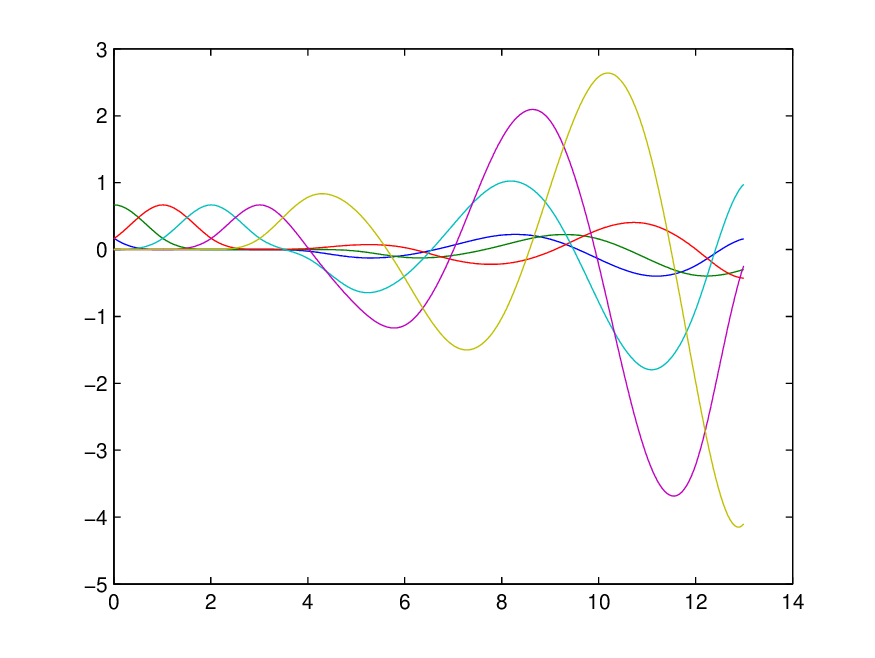}
  \caption[example2]{$\{S_j\}_{j=1}^6$ for the model found for $f_2$}
\label{fig:basisf2}
\end{figure}

\vfill\eject
\begin{figure}[hb]
  \centering
  \includegraphics[width=3in]{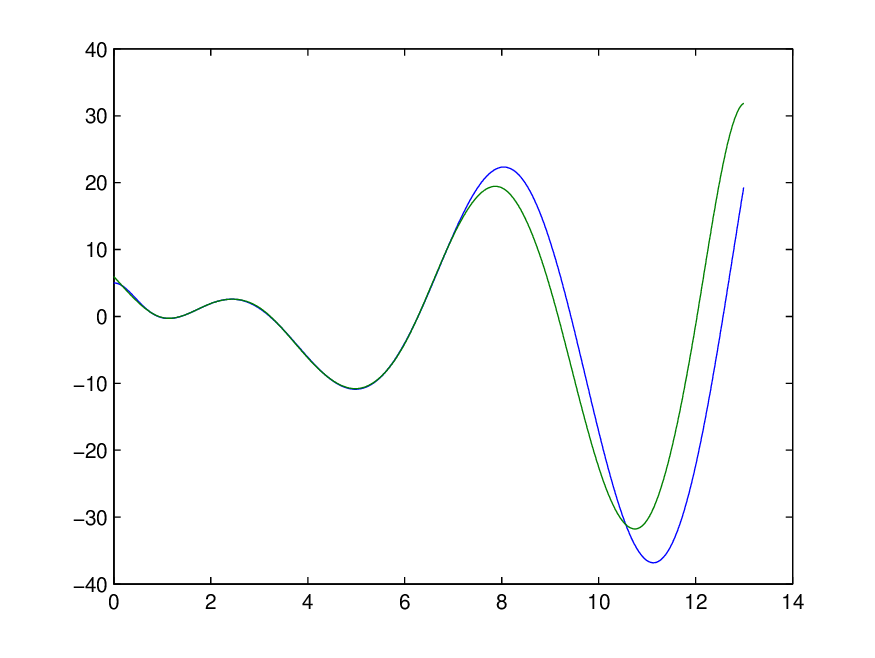}
  \caption[example1]{Approximation-extension of $f_2$ using the model-spline basis.}
  \label{fig:bsae1}
\end{figure}

\medskip
{\bf Remarks:}
\begin{enumerate}
\item Any shift of the basis functions $\{S_j\}_{j=1}^m$ also
satisfy the same model. We can use such shift in order to augment our basis
for the approximation-extension algorithm.
\item The above approach is both faster in application and is parameters'
free. The smoothness is determined directly by the choice of the function
$\phi$.
\end{enumerate}

\bigskip
{\bf Model-basis functions for the bivariate case}

\bigskip
The main motivation for using the basis functions' approach is the high
complexity involved is the application of the smoothing approach in 2-D.
Consider an $m\times m$ order bivariate model of the form (\ref{eq:model2D}).
W.l.o.g., we would like to form a basis for all the tensor-product bi-cubic
splines, with integer knots, that satisfy the model on $[0,N]^2$. Any such
bi-cubic spline can be written as
\begin{equation}\label{eq:S2}
S(x,y)=\sum_{i,j=-1}^{N+1}c_{i,j}B(x-i)B(y-j).
\end{equation}
According to Propositions \ref{p:Prop1} and \ref{p:Prop2}, the coefficients
$\{c_{i,j}\}$ should satisfy the model. I.e.,
\begin{equation}\label{eq:modelcij}
\sum_{1\le k,\ell\le m} p_{k,\ell}c_{i+k,j+\ell}=0,\ \ with \ \ p_{m,m}=1.
\end{equation}
Let $L=\{(i,j)|-1\le i\vee j\le m-1\}$ denote the $m-1$ layers of
indices along the bottom and the left boundary of the set of indices
$\{(i,j)\}_{i,j=-1}^{N+1}$. Given any boundary values for
$\{c_{i,j}\}_{(i,j)\in L}$, we can use the above model to fill out
the rest of the coefficients by:
\begin{equation}\label{eq:cijpred}
c_{i+m,j+m}=-\sum_{1\le k,\ell\le m;\ (k,\ell)\not= (m,m)}
p_{k,\ell}c_{i+k,j+\ell},\ \ i,j=0,...,N-m+1.
\end{equation}
Hence, the space of all the tensor-product bi-cubic splines, with integer
knots, that satisfy the model on $[0,N]^2$ is of dimension $\#(L)$. A basis
to this space can be generated, as in the univariate case, by taking a basis
for the space $V_L=\{c_{i,j}|(i,j)\in L\}$, extending each vector in the basis
by (\ref{eq:cijpred}), and using the resulting coefficients to define a basis
function by (\ref{eq:S2}).

In the following we go back to the bivariate example with noisy data for
$f_3$, now with a $6\times 6$ model. In this case the dimension of the
approximating space of spline functions satisfying the model is 85. To get a
feeling of the basis functions involved we plot one of them in Figure
\ref{fig:base65}. In Figure \ref{fig:f3data} we see the given data, in Figure
\ref{fig:apprf3} the approximation to the data by the spline basis functions,
and in Figure \ref{fig:f3extended} the extension into a larger domain.

\vfill\eject
\begin{figure}[hb]
  \centering
  \includegraphics[width=3in]{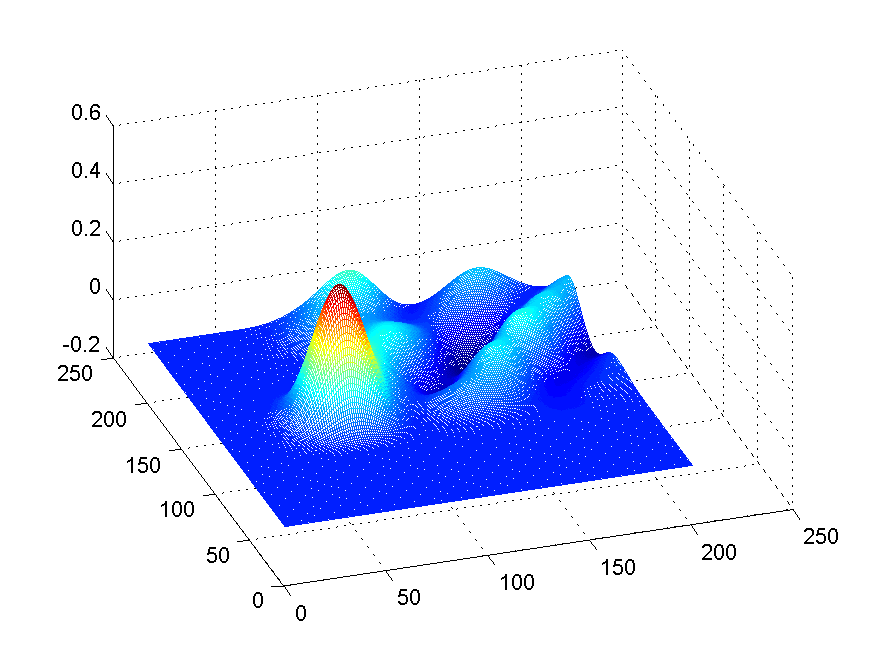}
  \caption[example1]{One of the model-spline basis functions for reconstructing $f_3$.}
  \label{fig:base65}
\end{figure}

\vfill\eject
\begin{figure}[hb]
  \centering
  \includegraphics[width=3in]{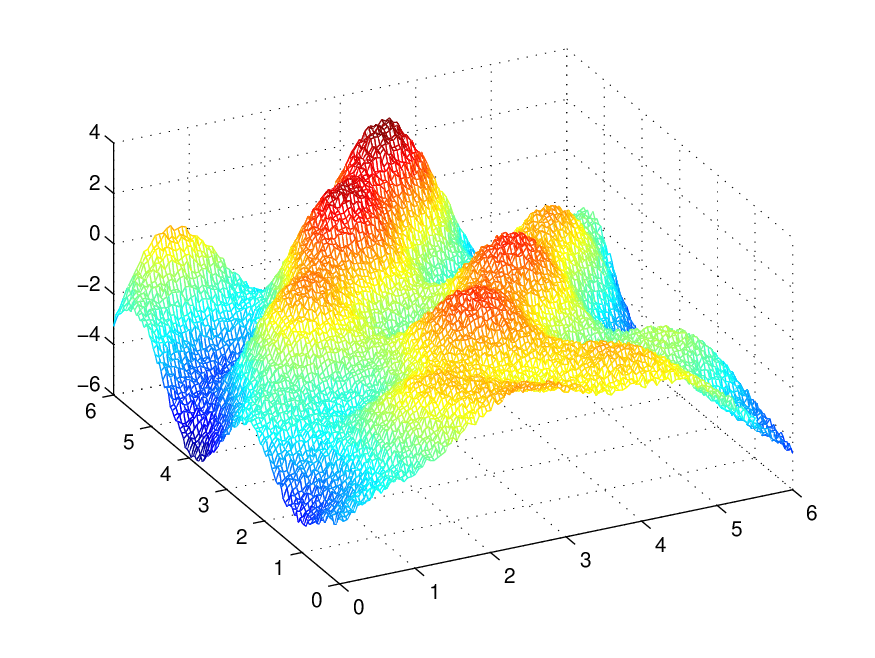}
  \caption[example1]{The noisy data for $f_3$.}
  \label{fig:f3data}
\end{figure}

\begin{figure}[hb]
  \centering
  \includegraphics[width=3in]{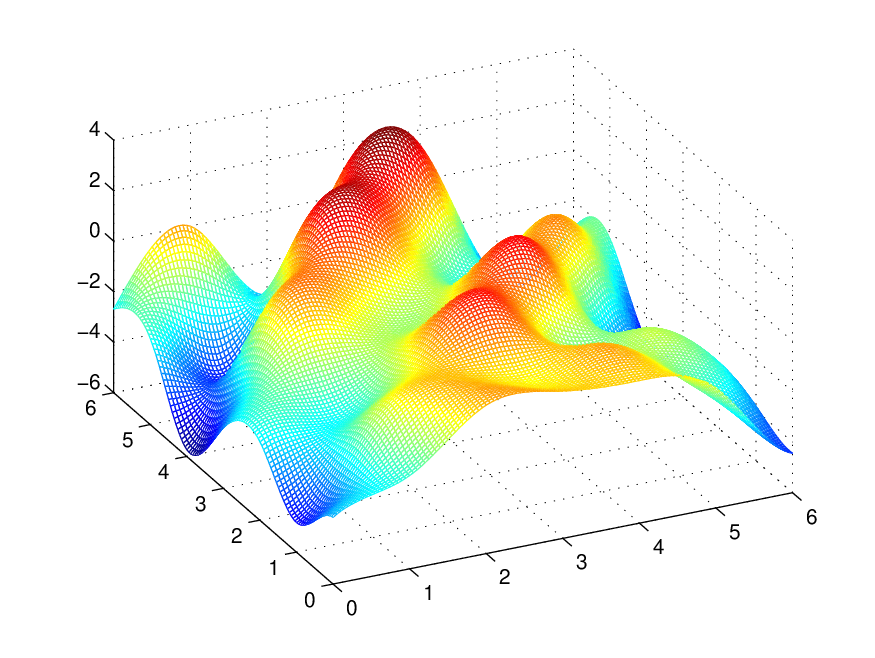}
  \caption[example2]{The approximation of $f_3$ by model-splines $f_3$}
\label{fig:apprf3}
\end{figure}

\begin{figure}[hb]
  \centering
  \includegraphics[width=3in]{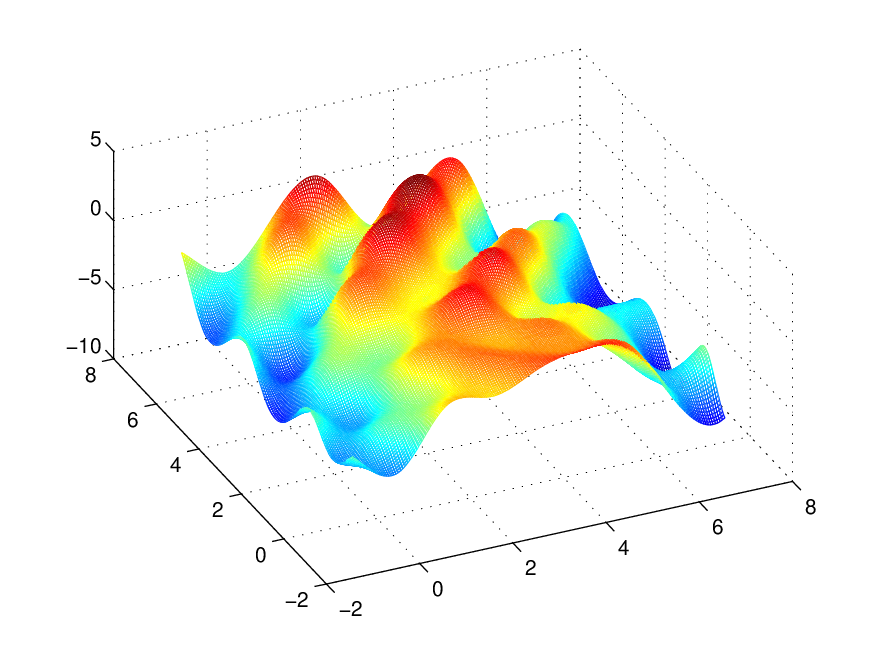}
  \caption[example1]{The extension of $f_3$ by model splines.}
  \label{fig:f3extended}
\end{figure}

\subsection{Interpolation between models}
In all the above examples we have demonstrated methods for functions'
extensions that preserve their behavior. Now we consider the possibility of
generating a smooth extension of a function that blends its behavior into
another desired behavior. This seems a whole project by itself, but with the
tools presented in this work we can already present a basic method and an
example that illustrate the potential of this direction. We consider the
univariate case, and start by fitting a model to a given data on $[a,b]$.
Denoting this model by $M1$, we would like to approximate the data by a smooth
function on $[a,b]$, and extend this approximation into $[b,c]$, so that the
behavior model will change smoothly from $M_1$ on $[a,b]$ to another behavior
$M_2$ at $c$. The simple idea is to define a linear model $M$ on $[a,c]$ that
is a blending of the two models.

We assume that both models are linear and of the same order $m$. The simple
idea is to define a linear model $M$ on $[a,c]$ that is a blending of the two
models. In Figure \ref{fig:cos2x} we depict the noisy data of $f(x)=cos(2x)$
on $[0,5]$. Computing a linear model of order 4, with constant coefficients,
for this data yield the coefficients of $M_1$:
$P=\{0.0803,0.6555,-0.4325,-0.2470,-1.0000\}$. We have chosen $M_2$ to be the
model for the function $f(x)=e^{-2x}$. The corresponding model coefficients
are: $P=\{0,0,0.0178,0.0024,-1.0000$. We now define a linear model with linear
coefficients (as in (\ref{eq:pred1}), by a linear interpolation between the
corresponding coefficients of the two models, such that it agrees with $M_1$
at $x=0$ and with $M_2$ at $x=8$. The resulting model is of the form
(\ref{eq:pred1}) with $u(x)=x$, and we can use it as in Section \ref{sec2} to
define the approximation extension of the data. The result is shown in Figure
\ref{fig:blend}.

\vfill\eject
\begin{figure}[hb]
  \centering
  \includegraphics[width=3in]{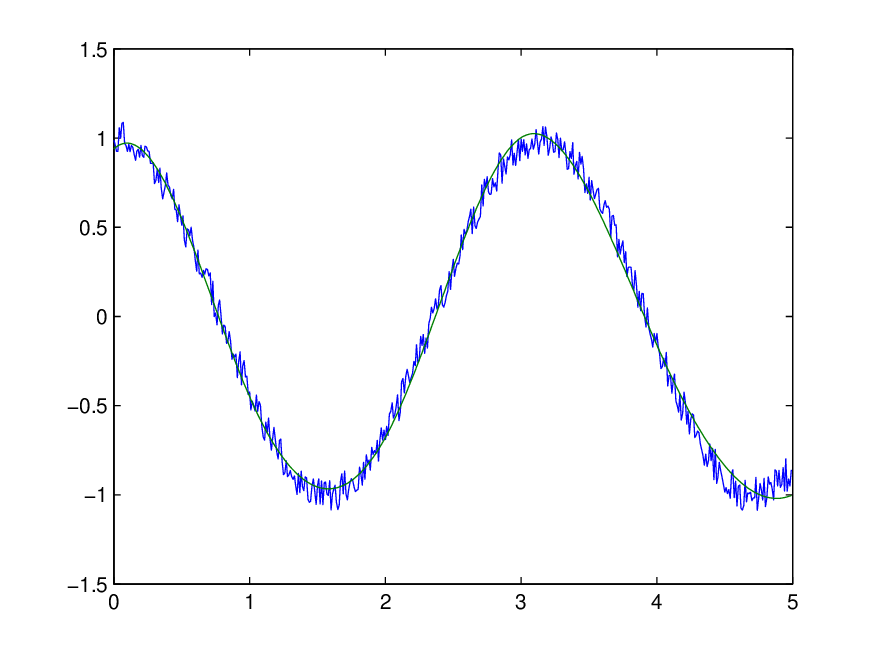}
  \caption[example2]{The approximation of the data of $cos(2x)$ by the blended mpdel.}
\label{fig:cos2x}
\end{figure}

\begin{figure}[hb]
  \centering
  \includegraphics[width=3in]{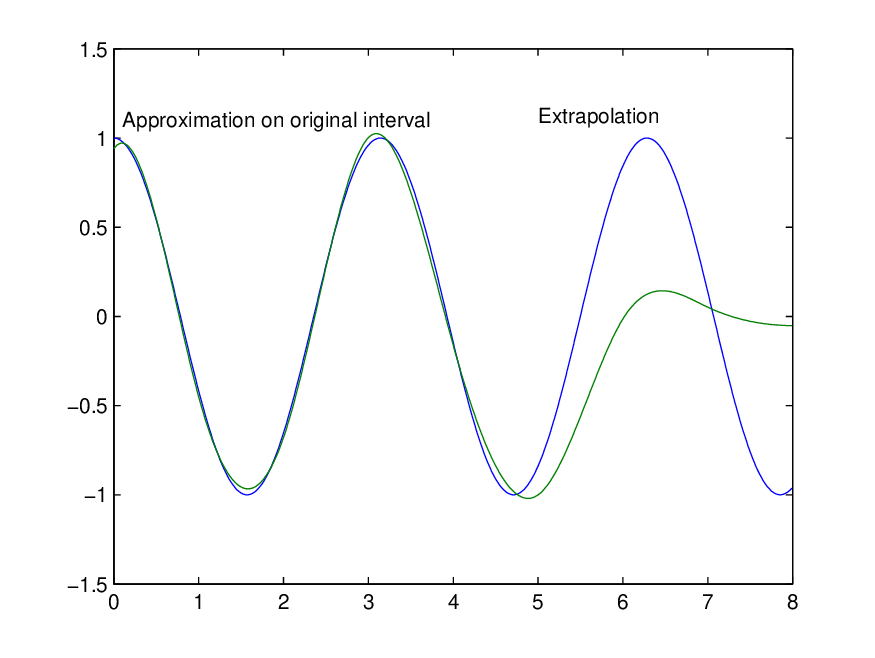}
  \caption[example1]{The approximation of $cos(2x)$ extended to decay as $e^{-2x}$.}
  \label{fig:blend}
\end{figure}

\bibliographystyle{amsplain}
\bibliography{zmin2}

\providecommand{\bysame}{\leavevmode\hbox to3em{\hrulefill}\thinspace}
\providecommand{\MR}{\relax\ifhmode\unskip\space\fi MR }
\providecommand{\MRhref}[2]{%
  \href{http://www.ams.org/mathscinet-getitem?mr=#1}{#2}
}
\providecommand{\href}[2]{#2}
\begin{thebibliography}{1}

\bibitem{Levin1973}
David Levin, \emph{Development of non-linear transformations for improving convergence of sequences}, International Journal of Computer Mathematics \textbf{3} (1973), no.~1, 371--388.

\bibitem{Levin1980}
\bysame, \emph{On accelerating the convergence of infinite double series and integrals}, Mathematics of Computation \textbf{35} (1980), no.~152, 1331--1345.

\bibitem{LevinSidi}
David Levin and Avram Sidi, \emph{Two new classes of nonlinear transformations for accelerating the convergence of infinite integrals and series}, Applied Mathematics and Computation \textbf{9} (1981), no.~3, 175 -- 215.

\bibitem{Osborne}
Smyth G.~K. M.~R.~Osborne, M.~R., \emph{A modified prony algorithm for exponential function fitting}, SIAM J. Sci. Statist. Comput. \textbf{16} (1995), 119--138.

\bibitem{Prony}
R.~Prony, \emph{Essai experimental et analytique: sur les lois de la dilatabilite de fluides elastique et sur celles de la force expansive de la vapeur de l'alkool, a differentes temperatures}, Journal de lEcole Polytechnique Floral et Plairial \textbf{1} (1795), no.~22, 24--76.

\bibitem{Shanks}
D.~Shanks, \emph{Non-linear transformations of divergent and slowly convergent sequences}, J. Math. and Physics \textbf{34} (1955), 1--42.

\bibitem{SidiBook}
A.~Sidi, \emph{Practical extrapolation methods}, Cambridge University Press, 2003.

\end{thebibliography}

\end{document}